\theoremstyle{plain}
\newtheorem{theorem}{Theorem}[section]
\newtheorem{lemma}[theorem]{Lemma}
\theoremstyle{definition}
\newtheorem{definition}[theorem]{Definition}
\theoremstyle{remark}
\newtheorem*{acknowledgements}{Acknowledgements}
\newcommand{\F}{\hat{F}}
\newcommand{\del}{\partial}
\begin{document} 

\title{Incompressibility and Least-Area surfaces}

\author{Siddhartha Gadgil}

\address{	Department of Mathematics\\
		Indian Institute of Science\\
		Bangalore 560003, India}

\email{gadgil@math.iisc.ernet.in}

\date{\today}

\subjclass{Primary 57N10; Secondary 53A10}

\begin{abstract}

We show that if $F$ is a smooth, closed, orientable surface embedded
in a closed, orientable $3$-manifold $M$ such that for each Riemannian
metric $g$ on $M$, $F$ is isotopic to a least-area surface $F(g)$,
then $F$ is incompressible.

\end{abstract}

\maketitle

\section{Introduction}

We assume throughout that all manifolds (and surfaces) we consider are
orientable.  Let $M$ be a closed, smooth, $3$-manifold and let $F$ be
a smoothly embedded surface in $M$.

For a Riemannian metric $g$ on $M$, we can seek to minimise the area
of embedded surfaces in the homotopy class of $F$. Here, we say two
embedded surfaces $F$ and $F'$ in $M$ are homotopic if there is a
homeomorphism $\varphi:F\to F'$, so that if $i_F:F\to M$ and
$i_{F'}:F'\to M$ denote the inclusion maps, then $i_F$ is homotopic to
$i_{F}'\circ \varphi:F\to M$. We consider the functional
$$A(g,F)=inf\{Area_g(F'):\text{$F'$ embedded surface homotopic to
$F$}\}$$

\begin{definition}
A surface $F$ is said to be \emph{least area} with respect to the
metric $g$ if $Area_g(F)=A(g,F)$
\end{definition}

We recall the concept of \emph{incompressibility} of the surface $F$.
\begin{definition}
A closed, smoothly embedded surface $F\subset M$ in a $3$-manifold $M$
is said to be incompressible if the following conditions hold:
\begin{itemize}
\item If $D\subset M$ is a smoothly embedded $2$-disc with $\del
  D\subset F$ and $int(D)\cap F=\phi$, then there is a disc $E\subset
  F$ with $\del E=\del D$.
\item If $F$ is a $2$-sphere, then $F$ does not bound a $3$-ball.
\end{itemize}
\end{definition}

A disc $D\subset M$ such that $\del D\subset F$ and $int(D)\cap
F=\phi$ so that $D$ is transversal to $F$ is called a
\emph{compressing disc} if there is no disc $E\subset F$ with $\del
E=\del D$.

If $F$ is \emph{incompressible} and $M$ is \emph{irreducible}, then a
fundamental result (\cite{MY1}, see also~\cite{HS}) is that for each
metric $g$, the above infimum is attained for some smooth, embedded
surface $F(g)$, i.e., there is a least area surface. Recall that $M$
is said to be irreducible if every embedded $2$-sphere $S\subset M$
bounds a $3$-ball in $M$. Least area surfaces enjoys several very
useful properties~\cite{FHS} -- for instance, leading to the
equivariant Dehn's lemma of Meeks and Yau~\cite{MY2}.

We show here that, conversely, the property of having least area
representatives for each Riemannian metric characterises
incompressibility. Fix henceforth a closed, smooth, orientable surface
$F$ embedded in a closed, orientable, $3$-manifold $M$.

\begin{theorem}
Suppose for each Riemannian metric $g$ on $M$, there is a smooth,
embedded surface $F(g)$ homotopic to $F$ such that
$Area_g(F(g))=A(g,F)$, then $F$ is incompressible.
\end{theorem}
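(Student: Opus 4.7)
My approach is the contrapositive: if $F$ is compressible, I want to exhibit a metric $g$ on $M$ for which the infimum $A(g,F)$ is not realised by any smooth embedded surface homotopic to $F$. The definition of compressibility gives two cases. If $F$ is a $2$-sphere bounding a $3$-ball $B \subset M$, the inclusion $i_F$ is null-homotopic, so $F$ is homotopic to any small embedded $2$-sphere in $B$. Shrinking such a sphere to a point produces embedded representatives of area tending to $0$ in any $g$, so $A(g,F)=0$ and no smooth surface attains this.

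Suppose instead that $F$ admits a compressing disc $D$ with $\partial D = \gamma \subset F$. The main construction is the familiar surgery-plus-thin-tube trick: let $A \subset F$ be a thin annular collar of $\gamma$, take two parallel pushoffs $D_{\pm}$ of $D$ on either side of $A$, and form the compressed surface $F^* = (F \setminus A) \cup D_+ \cup D_-$, which has strictly larger Euler characteristic. I recover the homotopy class of $F$ by excising a small disc of radius $1/n$ from each of $D_{\pm}$ and connecting them by a thin embedded annular tube through a neighbourhood of $D$. The resulting surfaces $F_n$ are smoothly embedded and isotopic to $F$, and a direct local computation gives $\mathrm{Area}_g(F_n) \to \mathrm{Area}_g(F^*)$ in any metric $g$. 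Hence $A(g,F) \le \mathrm{Area}_g(F^*)$ for every $g$.

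The final step is to choose $g$ so that this bound cannot be realised. I would pinch a background metric $g_0$ in a small neighbourhood of $D \cup F^*$ so that $\mathrm{Area}_g(F^*)$ is very small while arranging that any smooth embedded surface homotopic to $F$ must contain a handle over $D$ costing a definite positive amount of area (the handle being detected by the fact that $F$ has an essential loop $\gamma$ becoming null-homotopic in $M$, which survives in any surface homotopic to $F$ by the Loop Theorem). Then any minimising sequence for $A(g,F)$ has area converging to $\mathrm{Area}_g(F^*)$ and, in the varifold sense, converges to $F^*$ itself, which has the wrong topology; so no smooth embedded surface homotopic to $F$ can attain the infimum.

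The chief obstacle is the last step: one must quantify the ``handle cost'' in the pinched metric so that it strictly dominates $\mathrm{Area}_g(F^*)$, ruling out a genuine smooth minimiser. Everything else is a local surgery and area estimate around $D$, together with the observation in the sphere case that null-homotopic spheres can be shrunk to a point.
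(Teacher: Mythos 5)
Your overall plan---argue the contrapositive, dispose of the null-homotopic sphere case by shrinking, and compress along $D$ and re-attach a thin tube to get $A(g,F)\le \mathrm{Area}_g(F^*)$ for every metric $g$---agrees with the opening moves of the actual proof. But even at the level of setup one ingredient is missing: a single compression is not enough. The surface $F^*$ may itself be compressible, and then $A(g,F)$ can be strictly smaller than $\mathrm{Area}_g(F^*)$, so non-attainment cannot be read off from a comparison with $F^*$. The paper compresses repeatedly until every component of the resulting surface $\hat F$ is incompressible (or $\hat F$ is empty, in which case $F$ bounds a handlebody, $A(g,F)=0$ for every $g$, and one concludes as in the sphere case). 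Incompressibility of $\hat F$ is used both to build the comparison metric around it and to reach the final contradiction.

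The genuine gap is the step you yourself flag as the chief obstacle: it is not a technical refinement but the entire content of the theorem, and the mechanism you sketch fails. A smooth embedded surface homotopic to $F$ need not pass anywhere near $D$ or $F^*$---homotopy is far weaker than isotopy---so pinching the metric near $D\cup F^*$ imposes no constraint on competitors, and the Loop Theorem does not localize the null-homotopy of $\gamma$ to a neighbourhood of $D$; hence there is no a priori ``handle cost over $D$.'' The claim that a minimizing sequence converges as varifolds to $F^*$ is likewise unsupported (minimizing sequences in a mere homotopy class can drift or collapse, e.g.\ onto a graph when $F$ bounds a handlebody), and even granted a varifold limit of the wrong topology, non-attainment does not follow without a strict area lower bound valid for \emph{every} embedded representative. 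The paper supplies exactly this by a different mechanism: a warped-product metric $f(t)g_0\oplus dt^2$ on a product neighbourhood $\hat F\times[-T,T]$ of the fully compressed incompressible surface, with curvature and injectivity-radius control outside so that the monotonicity lemma forces any area minimizer into the product region; inside, the projection to a component $\hat F_0$ is strictly distance-decreasing off $\hat F_0\times\{0\}$, and degree arguments (Hopf--Kneser together with Jaco's handlebody theorem in the degree-zero case, Edmonds' theorem on degree-one maps in the degree-one case) pin down the topology so that the only possible minimizer is $\hat F_0$ itself, contradicting the compressibility of $F$. Some substitute for both halves---first confining any minimizer to a controlled region, then a strict area comparison inside it---would be needed to complete your plan.
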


Our result gives a geometric characterisation of incompressiblity,
which may be useful in proving incompressibility for surfaces
constructed as limits. In particular, this result was motivated by Tao
Li's proof~\cite{Li} of the Waldhausen conjecture in the non-Haken
case, where an incompressible surface was constructed as a limit of
strongly irreducible Heegaard splittings.

Henceforth assume, without loss of generality, that $F$ is
connected. If $F$ is a $2$-sphere, either $F$ is incompressible or $F$
bounds a $3$-ball in $M$. In the first case, there is nothing to
prove. In the second case, $F$ is homotopically trivial and hence
homotopic to the boundary of any $3$-ball in $M$. By considering the
boundaries of arbitrarily small balls, we see that $A(g,F)=0$ for any
metric $g$. Thus there is no embedded surface $F(g)$ with
$Area_g(F(g))=A(g,F)$. Thus, we can assume henceforth that $F$ is not
a $2$-sphere.

Suppose $F$ is not incompressible (and $F$ is not a $2$-sphere), then
there is a compressing disc $D$ for $F$. A regular neighbourhood of
$F\cup D$ has two boundary components, one of which is parallel to
$F$. Denote the other by $F'$. We call $F'$ the result of compressing
$F$ along $D$. Observe that $F$ is obtained from $F'$ by adding a
$1$-handle. If $F$ is a $2$-sphere that bounds a $3$-ball, we declare
the empty set to be the result of compressing $F$. 

Given any surface $F$, we can inductively define a sequence of
compressions. Namely, if $F$ is not incompressible, then we compress
$F$ along some compressing disc $D$ to get $F'$. We repeat this
process for each component of $F'$ which is not compressible. As the
maximum of the genus of the components of the surface $F'$ obtained by
compressing $F$ is less than the genus of $F$, this process terminates
after finitely many steps. The result is a (possibly empty) surface
$\F$, each component of which is incompressible.

Suppose the result of the compressions is empty, then $F$ is homotopic
to the boundary of a handlebody, i.e., the boundary of a regular
neighbourhood of a graph $\Gamma\subset M$. By considering arbitrarily
small neighbourhoods of $\Gamma$, we see that for any metric $g$,
$A(g,F)=0$. Thus there is no embedded surface $F(g)$ with
$Area_g(F(g))=A(g,F)$, i.e., the hypothesis of the theorem cannot be
satisfied.

Thus, we can, and do, assume henceforth that $F$ is not homotopic to
the boundary of a handlebody and $\hat F$ is not empty. Then $F$ is
obtained from $\hat F$ by addition of $1$-handles. Given $\epsilon>0$,
the $1$-handles can be attached to $\F$ so that the area of the
resulting surface, which is homotopic to $F$, is at most
$Area_g(\F)+\epsilon$. It follows that
$$A(g,F)\leq Area_g(\F)$$

We construct a metric $g$ which is a warped product in a neighbourhood
$N(\F)$ of $\F$ so that any least-area surface not contained in
$N(\F)$ has area greater than the area of $\F$. Thus, if a least-area
surface homotopic to $F$ exists, it must be contained in $N(\F)$. The
structure of the metric on $N(\F)$ together with some topological
arguments show that this cannot happen unless $F=\F$, i.e., $F$ is
incompressible.

\section{Construction of the metric}

In this section we construct the desired metric for which $F$ has no
least-area representative unless $F=\F$.

As $\F$ and $M$ are orientable, a regular neighbourhood $N(\F)$ of $\F$
is a product. We shall identify this with $\F\times [-T,T]$, with $T$
to be specified later. We shall also consider the regular
neighbourhood $n(\F)=\F\times [-1,1]\subset N(\F)$.

Choose and fix a metric of constant curvature $1$, $0$ or $-1$ on each
component of $\F$ and denote this $g_0$. Let the area of $\F$ with
respect to $g_0$ be $A_0$. 

We shall use the \emph{monotonicity lemma} of Geometric measure
theory. We state this below in the form we need. For a stronger result
in the Riemannian case, see~\cite{Fr}.

\begin{lemma}[Monotonicity lemma]
There exist constants $\epsilon>0$ and $R>0$ such that if $g$ is a
Riemannian metric on $M$ and $x$ is a point so that the sectional
curvature of $g$ on the ball $B_g(x,R)$ of radius $R$ around $x$ (with
respect to $g$) has sectional curvature satisfying $\vert K\vert\leq
\epsilon$ and $F$ is a least-area surface with $x\in F$, then
$Area_g(F\cap B(x,R))>A_0.$
\end{lemma}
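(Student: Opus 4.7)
The approach is to invoke the Riemannian version of the classical monotonicity formula for stationary (hence least-area) surfaces, and to choose $R$ large enough that the resulting Euclidean-like area lower bound exceeds $A_0$. The key input is: for a $2$-dimensional stationary surface $F$ passing smoothly through a point $x$ in a Riemannian $3$-manifold, the density ratio $Area_g(F \cap B_g(x,r))/(\pi r^2)$ tends to $1$ as $r \to 0$ and is essentially non-decreasing in $r$ up to a curvature-controlled correction factor.

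First I would record the standard Riemannian monotonicity formula: there is a universal constant $C>0$ such that if $\vert K\vert \leq \epsilon$ on $B_g(x,R)$ with $R < \pi/\sqrt{\epsilon}$ (so that no conjugate points intervene along radial geodesics from $x$), then
$$Area_g(F \cap B_g(x, R)) \geq e^{-C\epsilon R^{2}}\, \pi R^{2}.$$
The proof is classical: differentiate $Area_g(F \cap B_g(x,r))$ in $r$, convert the derivative to a boundary flux integral via stationarity, and use Rauch comparison to control the deviation from Euclidean geometry. The lower bound on the density at $x$ uses only that $F$ is a smoothly embedded surface through the point. Given this estimate, I would first fix $R$ so that $\pi R^2 > 2A_0$, and then choose $\epsilon>0$ small enough that both $R < \pi/\sqrt{\epsilon}$ and $e^{-C\epsilon R^2} > 1/2$; this forces $Area_g(F \cap B_g(x,R)) > A_0$ as required. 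The order of quantifiers is essential: $R$ is picked first, using only the fixed constant $A_0$, and $\epsilon$ is then chosen small relative to $R$, which matches the form of the statement.

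The main obstacle is verifying that the monotonicity argument applies on the full ball $B_g(x,R)$: the curvature bound gives a conjugate radius at $x$ of at least $\pi/\sqrt{\epsilon}$, but it does not bound the injectivity radius, so $B_g(x,R)$ could a priori fail to be a normal neighbourhood because of short geodesic loops. The standard resolution is to lift to the tangent space via $\exp_x$, where the preimage of $F \cap B_g(x,R)$ contains a connected sheet through $0 \in T_x M$ that is smooth and stationary in the pulled-back metric $\exp_x^* g$; the Euclidean monotonicity with curvature correction applies directly to this sheet, whose area is a lower bound for $Area_g(F \cap B_g(x,R))$. This gives the desired estimate using only the hypothesised curvature bound on $B_g(x,R)$, with no further injectivity radius hypothesis.
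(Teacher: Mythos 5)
Your overall strategy---quote the Riemannian monotonicity formula $Area_g(F\cap B_g(x,R))\geq e^{-C\epsilon R^2}\pi R^2$, fix $R$ with $\pi R^2>2A_0$, then shrink $\epsilon$---is the standard route, and it is consistent with how the paper treats this lemma: the paper offers no proof at all, quoting it from geometric measure theory (with a pointer to Frensel). The genuine problem is your last paragraph, where you claim the curvature bound alone suffices and dispose of the injectivity radius issue by lifting through $\exp_x$. The exponential map restricted to the $R$-ball in $T_xM$ is an immersion (since $R$ is below the conjugate radius) but need not be injective, so the lifted sheet covers its image with multiplicity: its area in the pulled-back metric is an \emph{upper} bound for the area of its image in $M$, not a lower bound. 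The inequality you need, ``area of the sheet $\leq Area_g(F\cap B_g(x,R))$'', is exactly backwards, and the monotonicity estimate upstairs therefore transfers no information downstairs.

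Moreover, the gap cannot be closed with curvature hypotheses alone, because a curvature-only statement is false: take $M=T^3$ with flat metrics rescaled to have arbitrarily small volume. Then $K\equiv 0$, an incompressible flat $2$-torus $F$ is least area in its homotopy class, and its total area---hence $Area_g(F\cap B_g(x,R))$ for any $x\in F$ and any $R$---can be made smaller than $A_0$. (Nothing in your argument used any feature of $M$ that would exclude such collapse.) In this example the lifted sheet through $0\in T_xM$ is a full plane of area about $\pi R^2$, while $F\cap B_g(x,R)$ is tiny, which is precisely the failure of your comparison. This is why the paper's metric-construction lemma also arranges the injectivity radius to be greater than $R$ at every point of $M-int(N(\hat F))$, and why the monotonicity lemma is applied only at such points: the intended hypothesis includes a non-collapsing condition at scale $R$. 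With the added hypothesis that the injectivity radius at $x$ is at least $R$, your first two paragraphs (monotonicity with a curvature-controlled correction via comparison estimates, choosing $R$ before $\epsilon$) do constitute a correct argument; the fix is to add that hypothesis, matching the construction lemma, rather than to remove it via the exponential-map lift.
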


We shall construct the desired metric in the following lemma.

\begin{lemma}
There is a Riemannian metric $g$ on $M$ satisfying the following
properties.
\begin{itemize}
\item On $N(\F)$, $g$ is of the form $g = f(t)g_0\oplus dt^2$, with
$f$ a smooth function with $f(0)=1$ and $f(t)>1$ for $t\neq 0$.
\item For $x\in M-int(N(\F))$, the sectional curvature of $g$ on $M$
  satisfies $\vert K\vert\leq \epsilon$.
\item For $x\in M-int(N\F))$, the injectivity radius at $x$ is greater
  than $R$.
\end{itemize}
\end{lemma}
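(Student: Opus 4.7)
The plan is to construct $g$ as a smooth amalgamation of two pieces: the prescribed warped product $f(t)g_0\oplus dt^2$ on $N(\F)$, together with a large conformal rescaling $\lambda^2 h$ of a background metric $h$ on the complement, where $h$ is arranged to have product form near $\del N(\F)$. The scaling parameter $\lambda>0$ will be fixed at the end so as to meet both the curvature and the injectivity-radius bounds.

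First choose a smooth function $\phi\co[-T,T]\to[1,\infty)$ with $\phi(0)=1$, $\phi(t)>1$ for $t\neq 0$, $\phi\equiv\lambda$ on neighbourhoods of $\pm T$, and all derivatives of $\phi$ vanishing at $\pm T$; set $f=\phi^2$. Then the warped product $g=f(t)g_0\oplus dt^2$ on $N(\F)$ is of the required form, and equals $\lambda^2 g_0\oplus dt^2$ near $\del N(\F)$. Next, by the collar neighbourhood theorem, choose a smooth metric $h$ on the compact manifold-with-boundary $\overline{M\sm\text{int}(N(\F))}$ that restricts to the product $g_0\oplus ds^2$ in a collar of $\del N(\F)$, with $s$ the inward unit-normal coordinate. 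Define $g=\lambda^2 h$ on the complement. After the reparametrisation $s\mapsto\lambda s$, which restores arc length, the collar metric reads $\lambda^2 g_0\oplus ds^2$, agreeing with the warped product to infinite order at $\del N(\F)$. Hence $g$ extends smoothly across the boundary, and the first bullet is built in.

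For the second bullet, the sectional curvatures of $\lambda^2 h$ equal $K_h/\lambda^2$ and are uniformly bounded by $C/\lambda^2$ on the compact complement, so $|K_g|\leq\epsilon$ throughout $M\sm\text{int}(N(\F))$ once $\lambda^2\geq C/\epsilon$. The main obstacle is the third bullet, since the injectivity radius is a global quantity on $(M,g)$. For $x\in M\sm\text{int}(N(\F))$ every geodesic loop at $x$ either stays in the complement---in which case its length is $\geq\lambda$ times the corresponding $h$-loop length, uniformly bounded below by compactness---or it leaves and later returns, contributing length $\geq 2\lambda\, d_h(x,\del N(\F))$ outside $N(\F)$, and both lower bounds exceed $2R$ for $\lambda$ sufficiently large. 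For conjugate points, geodesics in the complement satisfy Rauch's inequality with curvature bound $\epsilon$, giving conjugate radius $\geq \pi/\sqrt{\epsilon}>R$ once $\epsilon$ is small (which is permissible in the Monotonicity Lemma). Inside $N(\F)$ the warped-product sectional curvatures $K_0/\phi^2-(\phi'/\phi)^2$ and $-\phi''/\phi$ are kept bounded by a constant independent of $\lambda$, by taking $\phi$ with small derivatives on the central portion of $[-T,T]$ (for instance $\phi(t)=\cosh(ct)$ for $|t|\leq T-\delta$ with $c$ small) and gluing to the constant $\lambda$ near $\pm T$ via a bump function; Rauch comparison then bounds the conjugate radius uniformly along any geodesic entering $N(\F)$. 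Taking $T$ and $\lambda$ sufficiently large yields the required injectivity-radius bound at each $x$ in the complement.
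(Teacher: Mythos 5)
Your construction follows the same basic strategy as the paper (rescale the complement by a large constant, match along a product collar, warp down to $g_0$ inside $N(\F)$), and the smooth gluing and the curvature bound on the complement are fine. The genuine gap is in the injectivity-radius step, and it occurs exactly at the points the third bullet concerns: points $x$ on, or within $h$-distance $R/\lambda$ of, $\del N(\F)$. For such $x$ your dichotomy gives nothing: the lower bound $2\lambda\, d_h(x,\del N(\F))$ for loops that leave the complement degenerates to $0$, so geodesic loops based at (or near) $\del N(\F)$ which dip into $N(\F)$ and return are not controlled at all. Your attempted remedy --- curvature bounds inside $N(\F)$ plus Rauch --- only bounds the \emph{conjugate} radius; the injectivity radius is the minimum of the conjugate radius and half the length of the shortest geodesic loop, and it is precisely the loops that are the problem. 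With $\phi=\cosh(ct)$, $c$ small, on most of $[-T,T]$, the fibre metric $f(t)g_0$ is already of size comparable to $g_0$ a bounded distance inside the collar, and nothing you have said rules out geodesic loops at a boundary point of length on the order of the systole of $(\F_0,g_0)$; there is no reason for that to exceed $2R$, since $R$ is dictated by the Monotonicity Lemma and cannot be taken small (it must be large enough that a least-area surface through the centre of an $R$-ball has area exceeding $A_0$). In addition, the parenthetical claim that the warped curvatures are bounded independently of $\lambda$ fails in the bump-function region where $\phi$ climbs from a bounded value to $\lambda$ over a bounded interval: there $(\phi'/\phi)^2$ and $|\phi''/\phi|$ blow up with $\lambda$ unless the transition interval is allowed to grow, which you do not arrange.

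The repair is the device the paper builds in: keep the warping \emph{constant}, equal to the rescaling factor, on a collar whose width is itself large compared with $R$ (the paper takes $f\equiv s$ on $1\leq |t|\leq T$ with $T=1+s$, interpolating down to $1$ only inside $n(\F)=\F\times[-1,1]$). Then every geodesic segment of length at most $R$ issuing from a point of $M-int(N(\F))$ stays in the region where $g$ is literally the constant rescaling of a fixed metric on the complement union the product collar, so both the curvature bound and the injectivity-radius bound follow from the rescaling alone, with no analysis of geodesics in the warped zone required. Your construction, with only an unspecified ``neighbourhood of $\pm T$'' of constancy and a warping that varies through most of $N(\F)$, lacks this buffer, and as written the third bullet is not established.
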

\begin{proof}
Observe that $N(\F)-int(n(\F))$ has two components for each component
$\F_0$ of $\F$, each of which can be identified with $\F_0\times
[0,1]$ with $\F\times \{1\}$ a component of $\del N(\F)$ and $\F\times
\{0\}$ a component of $\del n(\F)$. On each such component consider
the product Riemannian metric $g_0\oplus dt^2$. Extend this smoothly
to a metric on the complement $M-int(n(\F))$ of the interior of
$n(\F)$. Rescale the metric by a constant $s>1$ to ensure that it has
sectional curvature satisfying $\vert K\vert\leq \epsilon$ and the
injectivity radius at each point outside $int(N(\F))$ is at least
$R$. We choose the constant $s$ to be greater than $1$ even if this is
not necessary to ensure the bounds on curvature and the injectivity
radius. We denote the rescaled metric, defined on $M-int(n(\F))$, by
$g$. The restriction of $g$ to each component of $N(T)$ can be
identified with the product metric $sg_0+dt^2$.

Let $T=1+s$. Then there is a natural identification of $N(\F)$ with
$\F\times [-T,T]$, with $n(\F)$ identified with $\F\times [-1,1]$ and
with the restriction of the metric $g$ to $N(\F)-int(n(\F))$ given by
$s g_0\oplus dt^2$.

We extend the constant function $f(t)=s$ on $[-T,-1]\cup [1,T]$ to a
smooth function on $[-T,T]$ with $f(0)=1$ and $f(t)>1$ if $t\neq
0$. The Riemannian metric on the complement of $n(\F)$ extends
smoothly to one given by $g=f(t)g_0\oplus dt^2$ on $N(\F)$. This
satisfies all the conditions of the lemma.
\end{proof}

Note that by construction $\F\times\{0\}$ is isometric to $\F$ with
the metric $g_0$. Further the projection map $p:F\times [-T,T]\to F$
is (weakly) distance decreasing, and strictly distance decreasing
outside $\F\times\{0\}$.

\section{Proof of incompressibility}

Suppose now that there is a surface $F(g)$ homotopic to $F$ with area
$A(g,F)$.

\begin{lemma}
We have $F(g)\subset \F_0\times (-T,T)$ for some component $\F_0$ of
$\F$.
\end{lemma}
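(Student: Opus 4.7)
The plan is to derive this inclusion in two steps: first, force $F(g)$ into the interior of the collar $N(\F)$ via the monotonicity lemma; then, use the connectedness of $F$ to confine $F(g)$ to a single component.

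To carry out the first step, I would record the key area bound $Area_g(F(g)) = A(g,F) \leq A_0$. The equality holds because $F(g)$ is least-area. The inequality $A(g,F) \leq Area_g(\F)$ was established at the end of the introduction, and since $f(0) = 1$ the restriction of $g$ to $\F \times \{0\}$ agrees with $g_0$, so $Area_g(\F \times \{0\}) = A_0$. Now suppose for contradiction that some $x \in F(g)$ lies outside $int(N(\F))$. By the last bullet of the preceding lemma, the injectivity radius at $x$ exceeds $R$, so $B_g(x,R)$ is an embedded ball. I would next argue that $B_g(x,R) \subset M - int(n(\F))$: any path in $M$ from $x$ to a point of $n(\F)$ must cross the annular buffer $N(\F) - n(\F)$, on which $g$ equals the product metric $s g_0 \oplus dt^2$, and the $t$-distance across this buffer is $T - 1 = s$. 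We may arrange $s > R$ in the construction (enlarging $s$ if necessary, which the construction explicitly allows), so no point of $n(\F)$ lies within $g$-distance $R$ of $x$. Hence $|K| \leq \epsilon$ holds throughout $B_g(x,R)$, and the monotonicity lemma yields $Area_g(F(g) \cap B_g(x,R)) > A_0$, contradicting $Area_g(F(g)) \leq A_0$. Therefore $F(g) \subset int(N(\F)) = \F \times (-T, T)$.

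For the second step, recall that $F$ has been assumed connected, so $F(g)$, being homeomorphic to $F$ via the homotopy equivalence in the definition of homotopic surfaces, is also connected. The open set $\F \times (-T,T)$ has one component $\F_0 \times (-T,T)$ for each component $\F_0$ of $\F$, and $F(g)$ must lie in a single such component.

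The step I expect to be the main obstacle is verifying the global curvature hypothesis of the monotonicity lemma, which demands the bound $|K| \leq \epsilon$ on all of $B_g(x,R)$ rather than merely at $x$. This is precisely why the construction interposes the product-metric buffer $N(\F) - n(\F)$ of $t$-width $s$: it shields the potentially high-curvature region $n(\F)$ from any ball of radius $R$ centered outside $int(N(\F))$, so that the curvature hypothesis is automatic once $x$ lies in the allowed region.
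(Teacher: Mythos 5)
Your proof is correct and follows the same route as the paper: apply the monotonicity lemma at a hypothetical point of $F(g)$ outside $\F\times(-T,T)$ to contradict $Area_g(F(g))=A(g,F)\leq Area_g(\F)=A_0$, then use connectedness of $F(g)$ to place it in a single component $\F_0\times(-T,T)$. Your additional verification that the ball $B_g(x,R)$ stays in the region $M-int(n(\F))$ where $\vert K\vert\leq\epsilon$ (by taking the buffer width $s>R$) is a detail the paper leaves implicit, and it is handled correctly.
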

\begin{proof}
If there is a point $x\in F(g)-\F\times (-T,T)$, the monotonicity
lemma applied to $F(g)\cap B(x,R)$ shows that the area of $F(g)$ is
greater than $A_0$, a contradiction. Further, as $F(g)$ is connected,
for some component $\F_0$ of $\F$, $F(g)\subset \F_0\times [-T,T]$.
\end{proof}

To simplify notation, we henceforth denote the surface $F(g)$ by
$F$. We consider the restriction of the projection map from $F_0\times
[-T,T]$ to $F$, which we denote, by abuse of notation, by $p:F\to
\F_0$. We have seen that this is strictly distance decreasing unless
$F=\F_0$.

We have two cases, depending on whether the embedded surface $F\subset
\F_0\times (-T,T)$ separates the boundary components of $\F_0\times
[-T,T]$. 

In case $F$ does not separate the boundary components of $\F_0\times
[-T,T]$, there is a curve $\gamma$ joining the boundary components
disjoint from $F$. By considering cup products, it follows that
$p:F\to \F_0$ has degree zero. On the other hand, if $F$ does separate
the boundary components of $\F_0\times [-T,T]$, as $F$ is connected
there is a curve $\gamma$ joining the boundary components intersecting
$F$ transversely in one point. It follows that we can choose an
orientation on $F$ so that $p:F\to \F_0$ has degree one.

Recall that $F$ is a connected, orientable surface that is not a
$2$-sphere. As $\F_0\times (-T,T)$ deformation retracts to $\F_0$, the
homotopy class of the inclusion map is determined by the homotopy
class of $p$. If $\F_0$ is not a $2$-sphere, then the homotopy class
of $p$ is determined by the induced map on fundamental groups. If
$\F_0$ is a $2$-sphere, then the homotopy class of $p$ is determined
by the degree of $p$.

We show first that the case where $F$ does not separate the boundary
components of $\F_0\times [-T,T]$ cannot occur.

\begin{lemma}
The surface $F$ must separate the boundary components of $\F_0\times
[-T,T]$.
\end{lemma}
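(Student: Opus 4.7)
The plan is to argue by contradiction. Suppose $F$ does not separate the boundary components of $\F_0\times[-T,T]$; then, as already noted, the projection $p\co F\to \F_0$ has degree zero. The goal is to show that this would force $F$ (and hence the original surface, to which $F=F(g)$ is homotopic) to be homotopic in $M$ to the boundary of a handlebody, contradicting the standing reduction.

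The first step is to observe that $\F_0\times[-T,T]$ deformation retracts onto $\F_0\times\{0\}$, so the inclusion $F\hookrightarrow M$ is homotopic to the composition $F\xrightarrow{p}\F_0\hookrightarrow M$; it therefore suffices to understand $p$ up to homotopy. I would then invoke the classical fact that a degree-zero map between closed orientable surfaces is homotopic to one whose image misses a point. When $\F_0=S^{2}$ this is immediate since $[F,S^{2}]\cong\Z$ is classified by degree, so $p$ is null-homotopic. When $\F_0$ has positive genus it is a standard obstruction-theoretic fact for the aspherical target, the primary obstruction to a deformation of $p$ off a chosen point being exactly $\deg(p)\in H_{2}(\F_0)$. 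Since $\F_0$ minus a point deformation retracts onto a one-skeleton graph $\Gamma\subset\F_0$, the map $p$ factors up to homotopy through $\Gamma$, and hence $F\hookrightarrow M$ factors up to homotopy through $\Gamma$ regarded as a graph in $M$.

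The final step is to convert this factorisation into the statement that $F$ is homotopic in $M$ to the boundary of a handlebody. This is the part of the argument I expect to require the most care: starting from the factorisation $F\to\Gamma\to M$, one thickens $\Gamma$ to a regular neighbourhood $N(\Gamma')$ in $M$ -- possibly after wedging additional loops onto $\Gamma$ in $M$ so that the genus of $\partial N(\Gamma')$ matches the genus of $F$ -- and then produces a continuous map $F\to\partial N(\Gamma')$ whose composition with the inclusion into $M$ realises the given homotopy class, using that the handlebody map $\pi_{1}(\partial N(\Gamma'))\to\pi_{1}(N(\Gamma'))$ is surjective to lift the required $\pi_{1}$-data. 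Granted this, the homotopy class of $F$ in $M$ is that of the boundary of a handlebody, contradicting the reduction made earlier that the original surface is not homotopic to such a boundary. Hence $F$ must separate the two boundary components of $\F_0\times[-T,T]$.
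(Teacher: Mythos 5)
Your first two steps coincide with the paper's: degree zero plus Hopf--Kneser (equivalently the obstruction-theoretic statement for an aspherical target, and degree classification when $\F_0=S^2$) lets you homotope $p$ off a point, so the inclusion $F\hookrightarrow \F_0\times[-T,T]$ factors, up to homotopy, through a graph, and on $\pi_1$ its image $G$ is a finitely generated free group. The gap is in your final step, and it is exactly the point where the paper invokes a nontrivial theorem of Jaco. What you need is not merely that $i_F$ factors up to homotopy through some graph $\Gamma'\subset M$: to contradict the standing reduction you must produce a \emph{homeomorphism} $\varphi\co F\to \del N(\Gamma')$ with $\iota\circ\varphi\simeq i_F$, because the paper's notion of homotopic embedded surfaces requires a homeomorphism, and it is this that lets one take arbitrarily small regular neighbourhoods of $\Gamma'$ and conclude $A(g,F)=0$. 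Equivalently, one must realise the epimorphism $\pi_1(F)\twoheadrightarrow G$ geometrically, as the map induced by including the boundary of a genus-$g$ handlebody into that handlebody; this is Jaco's theorem (the paper's formulation: there is a map $f\co F\to\Gamma$ with $\pi_1(\Gamma)\cong G$ whose mapping cylinder is a handlebody, compatibly with the given identification). Once one has this, embedding $\Gamma$ in $\F_0\times[-T,T]$ so as to induce $G\subset\pi_1(\F_0\times[-T,T])$ and using asphericity gives the required homotopy.

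Your proposed substitute -- ``use that $\pi_1(\del N(\Gamma'))\to\pi_1(N(\Gamma'))$ is surjective to lift the required $\pi_1$-data'' -- does not work. A surjection of groups admits no section in general, and an arbitrary homomorphism $\pi_1(F)\to\pi_1(N(\Gamma'))$ need not lift to $\pi_1(\del N(\Gamma'))$ through it; choosing lifts of generators will not respect the surface relation. Even if some homomorphic lift existed and were realised by a continuous map $F\to\del N(\Gamma')$, that only shows $i_F$ factors through the boundary of a handlebody up to homotopy, which is strictly weaker than what the contradiction needs (a map homotopic to a homeomorphism onto $\del N(\Gamma')$). Likewise, ``wedging on extra loops so the genus matches'' must be done so that the resulting homeomorphism still induces the correct composite $\pi_1(F)\to\pi_1(M)$ up to conjugation, which is again the realisation problem, not a counting-of-genus issue. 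So the skeleton of your argument is the paper's, but the decisive ingredient (Jaco's realisation of surjections onto free groups by handlebodies, or an equivalent statement) is missing, and the lifting mechanism you propose in its place is incorrect.
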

\begin{proof}
We have seen that $p:F\to \F_0$ has degree zero. Suppose first that
$\F_0$ is not a $2$-sphere. By a theorem of Hopf and
Knesser~\cite{Ho1}\cite{Ho2}\cite{Kn1}\cite{Kn2}, it follows that $p$
is homotopic to a map whose image does not contain some point $p\in
F_0$. It follows that $G=p_*(\pi_1(F))$ (which is finitely-generated)
is conjugate to the subgroup of a free group and hence is a finitely
generated free group. The projection $p:\F_0\times [-T,T]\to \F_0$
induces an isomorphism of fundamental groups. This gives an
identification of $G$ with the image of $\pi_1(F)$ under the
homomorphism induced by the inclusion $i:F\to \F_0\times [-T,T]$.

By a theorem of Jaco~\cite{Ja}, there is a finite graph $\Gamma$ and
$\pi_1(\Gamma)$ isomorphic to $G$ and a map $f:F\to\Gamma$ so that the
mapping cylinder $M(f)$ of $f$ is a handlebody. Moreover, we have an
identification of $\pi_1(\Gamma)$ with $G$ with respect to which the
homomorphism from $\pi_1(F)$ to $G\subset\pi_1(\F_0\times [-T,T])$
induced by the inclusion corresponds to the map induced by inclusion
from $\pi_1(F)$ to  $\pi_1(M(f))=\pi_1(\Gamma)$.

Choose an embedding of $\Gamma$ in $\F_0\times [-T,T]$ with induced
map on fundamental groups $\pi_1(\Gamma)\to G\subset\pi_1(\F_0\times
[-T,T])$ corresponding to the above identification of $\pi_1(\Gamma)$
with $G$. Then the surface $F$ is homotopic to the boundary of a
regular neighbourhood of the image of $\Gamma$, which is a handlebody
in $\F_0\times [-T,T]$. We have seen that in this case the hypothesis
of the theorem cannot be satisfied.

Finally consider the case when $\F_0$ is a $2$-sphere. As $p$ has
degree zero, $p$ is homotopic to a constant map. Hence the inclusion
map $i:F\to \F_0\times (-T,T)$ is also homotopic to a constant map. It
follows that $F$ is homotopic to the boundary of a handlebody. As
above, the hypothesis of the theorem cannot be satisfied in this case.

\end{proof}

It thus suffices to consider the case when $F$ does separate the
boundary components of $\F_0\times [-T,T]$.

\begin{lemma}
Suppose $F$ does separate the boundary components of $\F_0\times
[-T,T]$, then $F=\F_0$.
\end{lemma}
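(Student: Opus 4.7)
The plan is to obtain matching upper and lower bounds on $Area_g(F)$ and then to pass to the equality case pointwise. Write $A_1 = Area_{g_0}(\F_0)$ and view $p\co F \to \F_0$ as the restriction of the projection. Because $g = f(t)\,g_0 \oplus dt^2$ with $f \geq 1$ and $f(t) = 1$ only at $t=0$, the map $p$ is weakly distance decreasing, so the pointwise Jacobian satisfies $|\mathrm{Jac}(p)| \leq 1$, with equality at a point $(x,t) \in F$ only if $t = 0$ and the tangent plane $T_{(x,t)}F$ is horizontal. The area formula then yields the lower bound
\[
Area_g(F) \;\geq\; \int_F |p^{*} dA_{g_0}| \;=\; \int_{\F_0} \#p^{-1}(y)\,dA_{g_0}(y) \;=\; \bar k\, A_1,
\]
where $\bar k \geq |\deg(p)| = 1$ is the average cardinality of the fibers of $p$.

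For the matching upper bound I would use a vertically shrinking isotopy. Choose smooth diffeomorphisms $\psi_s\co (-T,T) \to (-T,T)$, $s \in [0,1)$, with $\psi_s(t) = (1-s)t$ on the compact image of $t|_F$ and $\psi_s = \mathrm{id}$ near $\pm T$, and let $\rho_s\co M \to M$ send $(x,t) \mapsto (x,\psi_s(t))$ on $\F_0 \times (-T,T)$ and be the identity elsewhere. Each $F_s := \rho_s(F)$ is smoothly embedded and isotopic to $F$ in $M$, so it is homotopic to $F$ in the sense of the paper and the least-area property gives $Area_g(F) \leq Area_g(F_s)$. A direct computation of the induced area element, combined with dominated convergence (using that $f$ is bounded on $[-T,T]$, so that a uniform integrable majorant on $F$ is available), yields
\[
\lim_{s \to 1^{-}} Area_g(F_s) \;=\; \int_F |p^{*} dA_{g_0}| \;=\; \bar k\, A_1,
\]
since $f((1-s)t) \to 1$ and $(1-s)^2 \to 0$. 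Combined with the lower bound, $Area_g(F) = \bar k\, A_1$, forcing $|\mathrm{Jac}(p)| = 1$ almost everywhere on $F$. The open set $U := \{|\mathrm{Jac}(p)| > 0\} \subset F$ then has full measure, hence is dense in $F$; by continuity $|\mathrm{Jac}(p)| \equiv 1$ on $U$, and the pointwise equality case forces $t \equiv 0$ and the tangent plane to be horizontal throughout $U$. Continuity of $t|_F$ and density of $U$ extend $t \equiv 0$ to all of $F$, so $F \subset \F_0 \times \{0\}$; since $F$ is a closed, connected $2$-submanifold sitting inside the connected $2$-manifold $\F_0 \times \{0\}$, we conclude $F = \F_0 \times \{0\} = \F_0$.

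The main technical obstacle I expect is the area-convergence step for the shrinking isotopy: arranging $\rho_s$ to extend to a genuine diffeomorphism of $M$ (equal to the identity near the boundary of $N(\F)$) and verifying the limit carefully with a uniform integrable majorant on $F$. Once that is in place, the extraction of $F = \F_0 \times \{0\}$ from the a.e.\ equality $|\mathrm{Jac}(p)| = 1$ is a routine use of continuity together with the matching of dimensions.
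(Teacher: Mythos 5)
Your argument is correct, but it reaches the conclusion by a genuinely different route from the paper's. The paper gets the upper bound $Area_g(F)\leq A_1$ topologically: since $p\co F\to\F_0$ has degree one, Edmonds' theorem exhibits $F$, up to homotopy, as $\F_0$ with $1$-handles attached whose cores and co-cores map to null-homotopic curves, so the homotopy class of $F$ contains embedded surfaces of area at most $A_1+\epsilon$; the lower bound then comes from pulling back the area form of $\F_0$ by the degree-one, distance-decreasing projection, with strictness unless $F=\F_0$. You instead manufacture the matching upper bound analytically: the vertical squashing diffeomorphisms $\rho_s$ produce embedded surfaces isotopic (hence homotopic, in the paper's sense) to $F$ whose areas converge to $\int_F\vert p^*dA_{g_0}\vert$, so the least-area property of $F$ itself gives $Area_g(F)\leq\int_F\vert p^*dA_{g_0}\vert$, while $\vert\mathrm{Jac}(p)\vert\leq 1$ gives the reverse inequality; the equality analysis ($t\equiv 0$ and horizontal tangent planes) then pins $F$ to the slice $\F_0\times\{0\}$, which forces $F=\F_0$ since $F$ is a closed surface inside a connected surface. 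Notably, your route avoids Edmonds' theorem altogether and never actually uses the separation (degree-one) hypothesis—the factor $\bar k\geq 1$ plays no role in the final cancellation—so it would also dispose of the non-separating case treated in the previous lemma; what the paper's argument buys in exchange is an explicit topological picture ($F$ equals $\F_0$ plus trivial handles) and no limiting-isotopy computation. Two minor points: the convergence $Area_g(F_s)\to\int_F\vert p^*dA_{g_0}\vert$ needs no dominated-convergence majorant, since $\rho_s^*g$ restricted along the compact surface $F$ converges uniformly to $p^*g_0$ (uniform continuity of $f$ plus $(1-s)^2\to 0$), and extending $\psi_s$ to a diffeomorphism equal to the identity near $\pm T$ is routine because the $t$-image of $F$ is a compact subinterval of $(-T,T)$; also the detour through $U=\{\vert\mathrm{Jac}(p)\vert>0\}$ is unnecessary, as continuity together with $\vert\mathrm{Jac}(p)\vert\leq 1$ and equality a.e.\ makes the closed set $\{\vert\mathrm{Jac}(p)\vert=1\}$ dense, hence all of $F$.
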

\begin{proof}
We use a theorem of Edmonds~\cite{Ed} regarding degree-one maps
$\varphi:F\to F'$ between closed surfaces. Namely, there is a map
$\psi$ homotopic to $\varphi$, a compact, connected subsurface
$\Sigma$ in $F$ and a disc $D\subset F'$ such that
$\psi(\Sigma)\subset D$ and $\psi$ maps $F-int(\Sigma)$
homeomorphically onto $F'-int(D)$. We can regard $F$ as obtained from
$F'$ by attaching $1$-handles to $D$. Further, all the cores and
co-cores of these $1$-handles are mapped to homotopically trivial
curves by $\psi$.

It follows that an embedded surface homotopic to $F$ is obtained from
$\F_0$ by adding $1$-handles (corresponding to those required to
obtain $F$ from $\F_0$). Thus, if $A_1$ is the area of $\F_0\times
\{0\}$, then $A(g,F)\leq A_1$, hence $Area_g(F)\leq A_1$.

Let $d\omega$ and $dA$ denote the area forms on $\F_0=\F_0\times
\{0\}$ and $F$ respectively. Then for a smooth function $h$ on $F$,
$p^*(d\omega)=hdA$. As the projection map is distance-decreasing,
$h(p)\leq 1$ for all $p$, with equality at all points only in the case
where $F=\hat F_0$.

Observe that $A_1=\int_{\F_0} d\omega=\int_{F} p^*(d\omega)=\int_{F}
hdA$, where the second equality holds as $p$ has degree one. Further,
if $F\neq \F_0$, then $\int_{F} hdA<\int_{F} dA= Area
(F)$. Thus, $A_1<Area_g(F)$, a contradiction.

It follows that $F=\F_0$.
\end{proof}

Thus, the surface $F$ must be homotopic to $\F_0$, which is
incompressible. Recall that we can assume that $F$ is not a
$2$-sphere. It follows that $F$ is incompressible by the
characterisation of incompressible surfaces as those for which the
induced map on fundamental groups is injective (if $F$ is not the
$2$-sphere). This contradicts our assumption that $F$ is compressible,
completing the proof of the theorem.\qed

\begin{acknowledgements}
I thank the referee for helpful comments and for pointing out that the
hypothesis of irreducibility in an earlier version was unnecessary. I
thank Harish Seshadri for helpful discussions.
\end{acknowledgements}

\bibliographystyle{amsplain}

\begin{thebibliography}{10}

\bibitem{Ed} A. Edmonds, 
\textit{Deformation of maps to branched coverings in dimension two.}  
Ann. Math. 110 (1979), 113--125.

\bibitem{FHS} M. H. Freedman, J. Hass,  G. P. Scott, 
\textit{Least area incompressible surfaces in $3$-manifolds.}
  Invent. Math.  71  (1983),  no. 3, 609--642.

\bibitem{Fr} K. R. Frensel, 
\textit{Stable complete surfaces with constant mean curvature.}
  Bol. Soc. Brasil. Mat. (N.S.)  27  (1996),  no. 2, 129--144.

\bibitem{HS} J. Hass, G. P. Scott 
\textit{The existence of least area surfaces in $3$-manifolds.}
  Trans. Amer. Math. Soc.  310  (1988),  no. 1, 87--114. 

\bibitem{Ho1} H. Hopf, 
\textit{Zur Topologie der Abbildungen von Mannigfaltigkeiten.}
Math. Ann. 100, 579--608 (1928)
 
\bibitem{Ho2} H. Hopf,  
\textit{Zur Topologie der Abbildungen von Mannigfaltigkeiten. II.} 
Math. Ann.102, 562--623 (1929)
 
\bibitem{Ja} W. Jaco, 
\textit{Heegaard splittings and splitting homeomorphisms.}
Trans. AMS 144 (1970), 365--379.

\bibitem{Kn1} H. Kneser,
\textit{Glättung von Flächenabbildungen.}
Math. Ann.100, 609--616 (1928)
 
\bibitem{Kn2} H. Kneser, \textit{Die kleinste Bedeckungszahl innerhalb
einer Klasse von Flächenabbildungen.}  Math. Ann.103, 347--358 (1930)
 
\bibitem{Li} T. Li, 
\textit{Heegaard surfaces and measured laminations. II. Non-Haken 3-manifolds.}
J. Amer. Math. Soc. 19 (2006), no. 3, 625--657

\bibitem{MY1} W. H. Meeks III, S. T. Yau,  
\textit{Topology of three-dimensional manifolds and the embedding
  problems in minimal surface theory.}
  Ann. of Math. (2)  112  (1980), no. 3, 441--484.

\bibitem{MY2} W. H. Meeks III, S. T. Yau, 
\textit{The equivariant Dehn's
lemma and loop theorem.}  Comment. Math. Helv.  56 (1981), no. 2,
225--239.

\end{thebibliography}

\end{document}